\documentclass[11pt]{amsart}

\usepackage{amsfonts}
\usepackage{amssymb}
\usepackage{amsmath,mathtools}
\usepackage{latexsym}
\usepackage{amscd}
\usepackage{xypic}
\usepackage{mathrsfs}
\usepackage{enumitem}
\usepackage{braket}
\usepackage[margin=1.3in]{geometry}
\usepackage{esint}
\usepackage{amsthm}
\usepackage{accents}
\usepackage[protrusion=true,expansion,stretch=5]{microtype}
\usepackage{tikz}
\usetikzlibrary{snakes}
\usepackage[pdftex]{hyperref}

\setlist{itemsep=3pt}

\allowdisplaybreaks

\newtheorem{prop}{Proposition}
\newtheorem{theo}[prop]{Theorem}
\newtheorem{lemm}[prop]{Lemma}

\newtheorem{conj}[prop]{Conjecture}

\theoremstyle{definition}

\newcommand{\RR}{\mathbb{R}}

\DeclareMathOperator{\area}{area}

\DeclareMathOperator{\Vol}{Vol}

\let\oldmarginpar\marginpar
\renewcommand\marginpar[1]{\-\oldmarginpar[\raggedleft\footnotesize #1]%
{\raggedright\footnotesize #1}}

\DeclareMathOperator{\Scal}{scal}

\usepackage{graphicx}

\allowdisplaybreaks

\author{Otis Chodosh}
\address
  {Department of Mathematics, Stanford University, Stanford, CA 94305, USA
}
\email{ochodosh@stanford.edu}

\author{Noa Vikman}
\address
  {Department of Mathematics\\ University of Fribourg\\ Chemin du Mus\'ee 23\\ 1700 Fribourg, Switzerland}
\email{noa.vikman@unifr.ch}

\date{\today}
\subjclass[2020]{Primary 53C23; Secondary 53C20}

\title{Macroscopic scalar curvature bounds on surfaces}

\begin{document}

\begin{abstract}
    We prove the generalized Geroch conjecture, and the corresponding hyperbolic conjecture, in dimension two.
\end{abstract}

\maketitle 

\section*{AI usage statement}
This note is a presentation of results obtained by ChatGPT-5.6 Pro without significant assistance from the authors. We have checked and reworked the proof but the fundamental strategy remains close to the original output. This article does not contain AI-written text.

\section{Introduction}
In his article on metaphors in systolic geometry \cite{Guth2010}, Guth describes the following macroscopic version of Geroch's conjecture that he attributes to Gromov \cite{Gromov1986}.
\begin{conj}[Generalized Geroch conjecture] \label{conj:torus}
    Fix $r>0$. The $n$-dimensional torus does not admit a Riemannian metric with $\Scal_r>0$.
\end{conj}
Here, $\Scal_r>0$ is equivalent to the property that all balls of radius $r$ in the universal cover have volumes strictly less than $\omega_nr^n$, where $\omega_n$ is the volume of a unit ball in $\mathbb R^n$.
Note that as $r\to 0$, the macroscopic scalar curvature converges to the usual scalar curvature, and as such this conjecture is a generalization of the non-existence of a metric with positive scalar curvature on the $n$-dimensional torus. For more context, see \cite{SchoenYau,GL,Stern,BHHSZ,BrendleWang,BiZhu}.  

Guth has also stated a similar conjecture for the hyperbolic case \cite{Guth2011}.
\begin{conj}\label{conj:hyperbolic}
    Let $(M^n,g_0)$ be a closed hyperbolic manifold, and let $g$ be another metric on $M$ with $\Vol_g(M)<\Vol_{g_0}(M)$. Then the following inequality holds for all $r>0$
    \begin{equation*}
    \max_{\tilde x\in \tilde M}\Vol_{\tilde g}(B_{\tilde g}(\tilde x,r))> \Vol_{\mathbb H^n}(B(o,r)).
    \end{equation*}
\end{conj}
Here, $B_{\tilde g}(\tilde x,r)$ denotes a ball centered at $\tilde x$ of radius $r$, and $\tilde M$ is the universal cover of $M$, with lifted metric $\tilde g$. This is the macroscopic generalization of a conjecture of Schoen \cite[p.\ 127]{Schoen:mont}. Non-sharp versions of Conjectures~\ref{conj:torus} and \ref{conj:hyperbolic} have been obtained by several authors, cf.\ 
\cite{BuragoIvanov,Guth2010a,Karam2015,BalacheffKaram,BraunSauer2021,Sabourau2022,Alpert2026,Zhang}

In this note, we give proofs of Conjecture~\ref{conj:torus} and Conjecture~\ref{conj:hyperbolic} when $n=2$. In particular, we have the following theorems.
\begin{theo} \label{theo:main-torus}
    Let $(\Sigma,g)$ be a closed oriented Riemannian surface of genus $k= 1$. Then, for every $r>0$,
    \begin{equation}\label{eq:main-thm-1}
    \max_{\tilde x\in \tilde \Sigma}\area_{\tilde g}(B_{\tilde g}(\tilde x,r))\geq \pi r^2,
    \end{equation}
    with equality for a given $r>0$ if and only if $g$ has constant curvature 0.
\end{theo}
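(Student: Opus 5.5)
The plan is to average the ball-area function over the surface and compare with the flat case using the geodesic flow on the unit tangent bundle. Since the deck group acts by isometries, $f(x)=\area_{\tilde g}(B_{\tilde g}(\tilde x,r))$ is a well-defined continuous function on $\Sigma$. It therefore suffices to show
\begin{equation*}
\frac{1}{\area_g(\Sigma)}\int_\Sigma f\,dA_g\;\geq\;\pi r^2 ,
\end{equation*}
with equality only if $g$ is flat. Indeed, if the maximum of $f$ equals $\pi r^2$, then the average of $f$ is at most $\pi r^2$, so equality would hold in the averaged inequality.

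First I write $f$ in geodesic polar coordinates on $\tilde\Sigma$. For $v\in S_{\tilde x}\tilde\Sigma$, let $c(v)\in(0,\infty]$ be the cut time of $\gamma_v$. Let $J(t,v)$ be the norm of the normal Jacobi field along $\gamma_v$ with $J(0)=0$ and $J'(0)=1$, so that $J''+K(\gamma_v(t))J=0$. Since $\exp_{\tilde x}$ is a diffeomorphism from the region inside the cut locus onto the complement of a null set,
\begin{equation*}
f(x)=\int_{S_x\Sigma}\int_0^{\min(r,c(v))}J(t,v)\,dt\,d\theta(v).
\end{equation*}
Integrating over $\Sigma$ turns this into an integral over $S\Sigma$ with respect to the Liouville measure $\mu$, which is invariant under the geodesic flow $\phi_t$. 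Gauss--Bonnet for genus $1$ gives $\int_{S\Sigma}K\,d\mu=2\pi\int_\Sigma K\,dA=0$. I would then prove a Berger--Kazdan/Croke-type inequality
\begin{equation*}
\int_{S\Sigma}\int_0^{\min(r,c(v))}J(t,v)\,dt\,d\mu(v)\;\geq\;2\pi\area(\Sigma)\cdot\frac{r^2}{2}.
\end{equation*}
The idea is to write $J=t\,e^{\psi}$ and use the Riccati equation for $u=J'/J$ together with flow invariance, which converts curvature terms integrated along geodesics into integrals of $K$ over $S\Sigma$; these vanish by Gauss--Bonnet. Jensen's inequality applied to the convex function $e^{\psi}$ then leaves only the nonnegative squared terms, which vanish exactly when $K\equiv0$.

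The main obstacle is the truncation at the cut time, since the Berger--Kazdan argument naturally produces $\int_0^r J\,dt$ over full geodesic segments. When conjugate points occur before time $r$, $J$ can vanish and become meaningless past its first zero, so the truncated integral could in principle be small. My first attempt would use the universal cover more seriously. Because $\tilde\Sigma$ is a plane and $\pi_1$ is abelian of rank two, I would try to show directly that $\area B(\tilde x,r)\geq\int_{S_x}\int_0^{\min(r,\tau(v))}J\,dt$, where $\tau$ is the first conjugate time; this comes down to showing that the region lost between the cut locus and the conjugate locus is compensated. Failing that, I would replace the bound on $J$ by a Gauss--Bonnet argument on the disks $B(\tilde x,t)$ themselves. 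The coarea formula gives $f(x)=\int_0^r L_x(t)\,dt$, where $L_x(t)$ is the length of the distance circle. Gauss--Bonnet on the possibly nonsmooth disk gives $L_x'(t)\le 2\pi-\int_{B(\tilde x,t)}K$ in the barrier sense, an inequality in the wrong direction. To get a lower bound instead, I would average over $x$ and use flow invariance, reducing the problem to controlling $\int_\Sigma\int_{B(\tilde x,t)}K\,dA\,dA(x)$, which by symmetry of the distance function and $\int K=0$ should be handled via the counting of lattice points. I expect this step, a lower bound for the averaged truncated integral that is sharp and valid despite cut and conjugate points, to be the real content of the proof.

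For the rigidity statement, I would trace equality through the chain. Equality forces $f\equiv\pi r^2$, equality in Jensen's inequality, and vanishing of the squared Riccati terms. Together these give $u=1/t$ along every geodesic up to time $\min(r,c)$, hence $K\equiv0$ near every point. Conversely, a flat metric lifts to the Euclidean plane, where every $r$-ball has area exactly $\pi r^2$, so equality holds.
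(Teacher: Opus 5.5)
Your proposal has a genuine gap exactly where you flag it. The inequality $\int_{S\Sigma}\int_0^{\min(r,c(v))}J\,dt\,d\mu\ge \pi r^2\area_g(\Sigma)$ is never proved, and the ingredients you sketch do not produce it.

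\emph{Conjugate points.} The cut/conjugate issue is the whole problem, not a technicality. By E.~Hopf's theorem a $2$-torus without conjugate points is flat. So for every non-flat $g$, once $r$ exceeds the conjugate radius, some $J(\cdot,v)$ vanishes before time $r$. An argument in $g$-polar coordinates, which is the no-conjugate-points setting where Berger--Kazdan/Croke-type estimates apply, therefore says nothing in precisely the cases of interest.

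\emph{Your first fallback.} It does not help. On $\{tv: t<\min(r,\tau(v))\}$ the exponential map is only a local diffeomorphism. The area formula therefore gives $\area(\exp(\cdot))\le\int J$: the region between the cut and conjugate loci is double counted, not lost. There is no reason for $\area B_{\tilde g}(\tilde x,r)\ge\int_{S_x}\int_0^{\min(r,\tau(v))}J\,dt\,d\theta$.

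\emph{The Riccati/Jensen mechanism.} Even before truncation it has the wrong sign. Writing $J=te^{\psi}$ gives $(t^2\psi')'=-t^2(K\circ\gamma_v+\psi'^2)$. Averaging over $S\Sigma$ with $\int K=0$ then yields $\overline{\psi}\le 0$. The squared terms \emph{decrease} the average, which is the Bishop--Gromov direction. Jensen's $\overline{e^{\psi}}\ge e^{\overline{\psi}}$ then gives no lower bound at all.

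\emph{The other routes.} As you note, the Gauss--Bonnet-on-distance-disks route gives an inequality in the wrong direction, and ``lattice point counting'' is not an argument. Your reduction to the $dA_g$-average also asks for more than the theorem; the paper deliberately uses a different weight.

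The missing idea is to avoid $g$-geodesics altogether.
\begin{enumerate}
\item Use uniformization $g=u^2g_0$ with $g_0$ flat.
\item From $\tilde x$, take straight $g_0$-rays in $\tilde\Sigma=\mathbb R^2$, each cut off when its $\tilde g$-length reaches $r$. The resulting star-shaped set $\Omega(x)$ lies in $B_{\tilde g}(\tilde x,r)$ and is automatically embedded. Its area is $\int_{S^1}\int_0^{T_r(x,\theta)}u(\gamma(t))^2\,t\,dt\,d\theta$, so no conjugate or cut points ever arise.
\item Average not against $dA_g=u^2d\mu_0$ but against $\nu=U\cdot L$, i.e.\ with weight $u\,d\mu_0$. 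This measure is invariant under the flow $\Psi_s$ of $U^{-1}X$.
\item Substituting $t=T_s(z)$ gives $A(z)=\int_0^r\int_0^s U(\Psi_s(z))/U(\Psi_\sigma(z))\,d\sigma\,ds$.
\item Flow invariance plus the reversal $z\mapsto\bar z$ turn $\int A(\bar z)\,d\nu$ into the same integral with the reciprocal integrand.
\item Then $a+a^{-1}\ge 2$ gives $\int_\Sigma\area_{\tilde g}(\Omega(x))\,u\,d\mu_0\ge\pi r^2\int_\Sigma u\,d\mu_0$, hence $\max_x\area_{\tilde g}(\Omega(x))\ge\pi r^2$.
\item Equality forces $XU=0$, so $u$ is constant, which gives the rigidity.
\end{enumerate}
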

\begin{theo} \label{theo:main-hyperbolic}
Let $(\Sigma,g)$ be a closed oriented Riemannian surface of genus $k\geq2$, normalized such that $\area_g(\Sigma) = \area_{g_0}(\Sigma)$ where $g_0$ is a metric of constant curvature $-1$. Then, for every $r>0$,
\begin{equation}\label{eq:main-thm-2}
    \max_{\tilde x\in \tilde \Sigma}\area_{\tilde g}(B_{\tilde g}(\tilde x,r))\geq 2\pi(\cosh{r}-1), 
\end{equation}
with equality for a given $r>0$ if and only if $g$ has constant curvature $-1$.
\end{theo}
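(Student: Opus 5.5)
The plan is to compare, for each center $\tilde x\in\tilde\Sigma$, the area function $a(\tilde x,t):=\area_{\tilde g}(B_{\tilde g}(\tilde x,t))$ with the model function $2\pi(\cosh t-1)$. The comparison will come from a Gauss--Bonnet differential inequality, which is then averaged over a fundamental domain $F\subset\tilde\Sigma$. Throughout, $\tilde\Sigma$ is topologically a plane, since the genus is $k\ge 2$.

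\textbf{Step 1 (one center).} Fix $\tilde x$ and let $L(t)$ be the length of the distance circle $\partial B(\tilde x,t)$. The coarea formula gives $a'(t)=L(t)$ for a.e.\ $t$. The classical two-dimensional results on distance circles (Fiala, Hartman, Shiohama--Tanaka) apply on the plane $\tilde\Sigma$ despite the cut locus. They give, in the sense of distributions,
\[
L'(t)\le 2\pi-\int_{B(\tilde x,t)}K\,dA,
\]
where the cut locus only contributes with the favourable sign. Combining the two,
\[
a''(\tilde x,t)\le 2\pi-\int_{B(\tilde x,t)}K,\qquad a(\tilde x,0)=a'(\tilde x,0)=0 .
\]

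\textbf{Step 2 (averaging).} Set $m(t)=\frac{1}{A}\int_F a(\tilde x,t)\,d\tilde x$, where $A=\area_g(\Sigma)=4\pi(k-1)$. Deck invariance and Fubini give
\[
\int_F\int_{B(\tilde x,t)}K(\tilde y)\,d\tilde y\,d\tilde x=\int_F K(\tilde y)\,a(\tilde y,t)\,d\tilde y .
\]
Hence
\[
m''(t)\le 2\pi-\frac1A\int_\Sigma K\,a(\cdot,t).
\]
Gauss--Bonnet gives $\int_\Sigma K=-A$. So if $a(\cdot,t)$ were constant, this would read $m''\le 2\pi+m$. With $m(0)=m'(0)=0$, ODE comparison would then give $m(t)\le 2\pi(\cosh t-1)$, which is the wrong direction.

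So I would rather run the argument by contradiction. Assume $a(\tilde x,r)<2\pi(\cosh r-1)$ for all $\tilde x$. Then I would look for an inequality that forces the average to be large, namely a lower bound for $m$. The natural source is the orbit-counting identity
\[
A\,m(t)=\int_F\int_F\#\{\gamma\in\pi_1\Sigma:\ d_{\tilde g}(\tilde x,\gamma\tilde y)<t\}\,d\tilde x\,d\tilde y .
\]
The aim is to bound this count from below using the equal-area normalization $A=\area_{g_0}(\Sigma)$. Concretely, I would write $g=e^{2u}g_0$ and compare the volume entropy and the counting function of $g$ against those of $g_0$. Katok's entropy inequality $h(g)^2A\ge h(g_0)^2A$ is the asymptotic version of this comparison. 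What is needed here is a finite-$r$ refinement of it, obtained through the Jensen/convexity step in Katok's proof applied to the averaged counting function.

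\textbf{Step 3 (equality).} If equality holds in \eqref{eq:main-thm-2} at some $r$, then every inequality in Steps 1--2 must be an equality on $[0,r]$. In particular:
\begin{itemize}
\item the cut locus does not contribute to the circle-length inequality for $t<r$;
\item the convexity step in Step 2 is sharp, which forces $u$ to be constant;
\item $K\equiv-1$.
\end{itemize}

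\textbf{Main obstacle.} The hard step is the middle one. The curvature term $\int_\Sigma K\,a(\cdot,t)$ couples the pointwise curvature to the ball areas. Gauss--Bonnet controls only $\int K$, and the naive estimate points the wrong way.

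My first attempt would be to avoid curvature altogether in the lower bound. I would use the counting identity together with a Crofton/Santaló-type formula on the unit tangent bundle. This expresses $m(t)$ as an average of lengths of geodesic segments, which should make the normalization $\area_g=\area_{g_0}$ usable through Cauchy--Schwarz or Jensen. I would then use the Step 1 inequality only to handle the equality case.
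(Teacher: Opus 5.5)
There is a genuine gap: the proposal never proves the inequality. Step 1 and the first half of Step 2 give \emph{upper} bounds on ball areas, as you note yourself. The contradiction hypothesis $a(\tilde x,r)<2\pi(\cosh r-1)$ is never used. The orbit-counting identity is only a Fubini rewriting of $A\,m(t)$. The ``finite-$r$ refinement of Katok's inequality'' that should bound this count from below is exactly the content of the theorem, and it is not supplied. Katok's argument cannot simply be run at a fixed radius. It compares exponential growth rates by averaging the conformal factor along long $g_0$-geodesics, and entropy says nothing about $\area_{\tilde g}(B_{\tilde g}(\tilde x,r))$ for a given $r$. Moreover, any lower bound on $\#\{\gamma: d_{\tilde g}(\tilde x,\gamma\tilde y)<t\}$ has to come from upper bounds on $d_{\tilde g}$ via $g$-lengths of explicit curves. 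That just brings you back to estimating the area of an explicit subset of the ball, which is the real problem. Step 3 therefore has nothing to rest on. In particular, its cut-locus bullet refers to an inequality that plays no role in any lower bound.

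The missing ideas, as the paper implements them, are the following.
\begin{enumerate}
\item Write $g=u^2g_0$ and replace the ball by the star-shaped set $\Omega(x)\subset B_{\tilde g}(\tilde x,r)$. This set is swept out by the $g_0$-geodesics from $\tilde x$, each stopped when its $g$-length reaches $r$. Its area is exactly $\int_{S^1}\int_0^{T_r(x,\theta)}u^2\sinh t\,dt\,d\theta$.
\item Average with respect to $u\,d\mu_0$, not with respect to $g$-area as your $m(t)$ does. This weight is the projection of $\nu=UL$ on $T^1\Sigma$. By Cartan's formula, $\nu$ is invariant under the flow $\Psi_s$ of $U^{-1}X$, i.e.\ under $g_0$-geodesics run at $g$-unit speed.
\item Rewrite
$A(z)=\int_0^r\int_0^s\frac{U(\Psi_s z)}{U(\Psi_\sigma z)}\cosh(T_{s-\sigma}(\Psi_\sigma z))\,d\sigma\,ds$. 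Flow invariance and the time reversal $z\mapsto\bar z$ then turn the ratio $U(\Psi_{s-\sigma}z)/U(z)$ into its reciprocal, so $a+a^{-1}\ge 2$ eliminates the conformal factor.
\item Only after this do the convexity steps you anticipated enter: Jensen for $\cosh$, the identity $\int T_{s-\sigma}\,d\nu=2\pi(s-\sigma)\area_{g_0}(\Sigma)$, and Cauchy--Schwarz in the form $\nu(T^1\Sigma)\le 2\pi\area_g(\Sigma)$.
\end{enumerate}
Without the invariant weighted measure and the reversal trick, Jensen and Cauchy--Schwarz have nothing to act on. The equality case then follows from equality in AM--GM: $U\circ\Psi_s=U$, hence $XU=0$, so $u$ is constant, and equal to $1$ by the area normalization.
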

The methods used here depend on the uniformization theorem in a fundamental way and thus do not seem to extend to higher dimensions (even to give a non-sharp estimate). 

\subsection{Outline of proof strategies}
Both results follow similar strategies so we focus on Theorem \ref{theo:main-torus}. We prove \eqref{eq:main-thm-1} by showing that the average value of $\area_{\tilde g}(B_{\tilde g}(\tilde{x},r))$ over $\Sigma$ \emph{with respect to a suitable measure} (not the area measure) is at least $\pi r^2$. To do this, we let $\Omega(x)$ be the star-shaped region in $\RR^2$ containing all Euclidean segments based at a lifted point $\tilde x$ with $\tilde g$-length at most $r$ (this is clearly a subset of the $\tilde g$-ball of radius $r$). Then we write the area of $\Omega(x)$ as an integral over a fiber of the $g_0$-unit sphere bundle and show that the average of the resulting area element with respect to some measure $\nu$ is bounded below. 

This is done in two steps. First, we define $\Psi_s$ to be a $1$-parameter family of diffeomorphisms of the $g_0$-unit tangent bundle whose flowlines are (lifts of) $g_0$-geodesics reparametrized to have $g$-unit speed. There exists a natural measure $\nu$ on the $g_0$-unit tangent bundle that's invariant under $\Psi_s$ and if we average the area of $\Omega(x)$ with respect to (the projection of) $\nu$ then we can use invariance of $\nu$ in a change of variables argument. Second, the reversibility of the $\Psi_s$ flow gives a different way to write this averaged area element whose integrand turns out to be the ``reciprocal'' of the first expression. These two expressions can be combined using AM-GM to give the desired integrated lower bound on the area element, which yields---after applying Fubini---a lower bound for the average area of $\Omega(x)$. 

\subsection{Acknowledgements} N.V. was supported by the Swiss National Science Foundation Grant 212867. O.C. was
partially supported by a Terman Fellowship and an NSF grant (DMS-2304432). He is grateful to DeepMind for access to its AlphaEvolve system which he used (before the proofs communicated here were discovered) to search for a counterexample to Theorem \ref{theo:main-torus}. We are grateful to Yevgeny Liokumovich for putting us in contact as well as for his interest in this note. 

\section{Proofs of Theorem~\ref{theo:main-torus} and Theorem~\ref{theo:main-hyperbolic}}
Let $(\Sigma,g)$ be a closed oriented Riemannian surface of genus $k\geq 1$. By uniformization there is a smooth conformal factor $u\colon \Sigma\to \mathbb R^+$ such that $g=u^2g_0$ where $g_0$ is a metric of constant curvature $0$ or $-1$. Let $d\mu_0$ denote the area measure for $g_0$. In the higher genus case we assume that $\area_g(\Sigma) = \area_{g_0}(\Sigma)$. 

\subsection{The star-shaped domains} 
Fix $r>0$ for the remainder of the paper. Given some point $z=(x,\theta)\in T^1\Sigma$ from the unit tangent bundle, let $\gamma_z(t)$ denote a unit speed $g_0$-geodesic in the $t$-parameter starting at $x$ in the direction of $\theta$. Let $T_s(z)$ denote the reparametrization such that $\gamma_z(T_s(z))$ is unit speed in the $s$-parameter with respect to the metric $g$. We define $\eta_z(s)=\gamma_z(T_s(z))$.

Now for some $x\in \Sigma$, take some lift $\tilde x\in \tilde \Sigma$ in the universal cover. We define the star-shaped domain
\[
\Omega(x)\coloneqq \{\tilde\eta_{(x,\theta)}(s): s\in [0,r], \ \theta\in S^1\}\subset \tilde\Sigma
\]
where the lifts of $\eta$ are such that $\tilde\eta_{(x,\theta)}(0)=\tilde x$. Since the $\tilde g$-distance between endpoints of the curves is at most their $\tilde g$-lengths, $\Omega(x)\subset B_{\tilde g}(\tilde x,r)$. Let us compute the area of the domain using polar coordinates in the universal cover:
\[
\area_{\tilde g}(\Omega(x)) = \int_{S^1}A(x,\theta)\,d\theta
\]
where,
\begin{equation}\label{eq:A-integral}
    A(x,\theta) = \int_{0}^{T_r(x,\theta)}u(\gamma_{(x,\theta)}(t))^2t\, dt 
    \quad \text{or} \quad \int_{0}^{T_r(x,\theta)}u(\gamma_{(x,\theta)}(t))^2 \sinh{t}\, dt
\end{equation}
for the torus case and the higher genus case respectively. We will show that
\[
\max_{ x\in \Sigma}\area_{\tilde g}(\Omega(x)) \geq \pi r^2 \quad \text{or} \quad 
\max_{ x\in \Sigma}\area_{\tilde g}(\Omega(x)) \geq 2\pi(\cosh r-1).
\]
Since $\Omega(x)\subset B_{\tilde g}(\tilde x,r)$ this implies (\ref{eq:main-thm-1}) and (\ref{eq:main-thm-2}).

\subsection{Geodesic flow and the invariant measure}
Let $X$ be the geodesic vector field on the $g_0$-unit tangent bundle $T^1\Sigma$, and let $U=u\circ \pi$ where $\pi\colon T^1\Sigma\to \Sigma$ is the projection map. Consider the scaled vector field $U^{-1}X$. Let $\phi_t$ and $\Psi_s$ describe the flow in $T^1\Sigma$ along $X$ and $U^{-1}X$ respectively. Note that $\pi(\phi_t(z)) = \gamma_z(t)$ and $\pi(\Psi_s(z)) = \eta_z(s)$. Let $L$ be the Liouville volume form on $T^1\Sigma$, whose induced measure $dL$ is invariant with respect to the geodesic flow of $\phi_t$. We now consider the scaled form $\nu =UL$.
\begin{lemm}
    The measure $d\nu$ is invariant with respect to the flow $\Psi_s$.
\end{lemm}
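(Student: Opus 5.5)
We will show that the Lie derivative of the form $\nu = UL$ along the vector field $U^{-1}X$ vanishes, i.e.\ $\mathcal L_{U^{-1}X}(UL)=0$. Invariance of the measure $d\nu$ under the flow $\Psi_s$ then follows by integrating in $s$: $\frac{d}{ds}\Psi_s^*\nu = \Psi_s^*\mathcal L_{U^{-1}X}\nu = 0$. Since $T^1\Sigma$ is compact and $U^{-1}X$ is smooth, the flow is complete and this argument applies for all $s$.

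The computation uses Cartan's formula. Because $L$ is a top-degree form on the three-dimensional manifold $T^1\Sigma$, we have $d(\cdot)=0$ on $L$ and on $UL$. Hence
\[
\mathcal L_{U^{-1}X}(UL) = d\,\iota_{U^{-1}X}(UL) = d\,\iota_X L = \mathcal L_X L,
\]
where the middle equality holds because the scalar factors $U^{-1}$ and $U$ cancel inside the interior product. By Liouville's theorem the geodesic flow $\phi_t$ preserves $L$, so $\mathcal L_X L = 0$, and the claim follows.

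As a sanity check, there is an alternative formulation. For any smooth function $f$ and volume form $L$ one has $\operatorname{div}_L(fX) = f\operatorname{div}_L X + X(f)$. With $\operatorname{div}_L X=0$, the divergence of $U^{-1}X$ with respect to $UL$ equals $U^{-1}\operatorname{div}_L(U\cdot U^{-1}X) = U^{-1}\operatorname{div}_L X = 0$. This confirms the same conclusion.

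The only point needing care is the orientation and sign convention for $L$. Since $U>0$, $\nu$ is a positive volume form whenever $L$ is, so the measure $d\nu$ it induces is well defined. No real obstacle is expected: the key step is simply recognizing that multiplying the volume by $U$ exactly compensates for dividing the vector field by $U$.
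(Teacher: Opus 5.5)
Your proof is correct and follows the same route as the paper. Both use Cartan's formula with $d(UL)=0$ (top-degree form), the cancellation $\iota_{U^{-1}X}(UL)=\iota_X L$, and Liouville's theorem $\mathcal L_X L = 0$; your remarks on completeness of the flow and the divergence reformulation are correct supplementary details.
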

\begin{proof}
    We compute the Lie derivative using Cartan's formula and Liouville's theorem,
    \[\mathcal L_{U^{-1}X}(\nu)= \mathcal L_{U^{-1}X}(UL)=d\iota_{U^{-1}X}(UL) = d\iota_X(L) = \mathcal L_X( L) = 0\]
    implying that $d\nu$ is invariant under the flow $\Psi_s$.
\end{proof}
Note that since integrating with $dL$ can be done fiberwise,
\[
\int_{T^1\Sigma} A(z)\,d\nu(z) = \int_\Sigma\int_{S^1_x}A(x,\theta_x)u(x)\, d\theta_x  d\mu_0(x) = \int_\Sigma \area_{\tilde g}(\Omega(x))u(x)\,d\mu_0(x).
\]
For a description of the geodesic vector field, the Liouville measure and its invariance to the geodesic flow, see \cite[Chapter VII]{Chavel2006}.

\subsection{The proof of Theorem~\ref{theo:main-torus}}
To start, we change variables in (\ref{eq:A-integral}) from $t$ to $s$, where $t=T_s(z)$, using,
\[
\frac{dt}{ds} = \frac{1}{U(\Psi_s(z))}, \qquad 
t(s) = \int_0^s\frac{d\sigma}{U(\Psi_\sigma(z))}\]
so that
\[
A(z) = \int_0^r \int_0^s \frac{U(\Psi_s(z))}{U(\Psi_\sigma(z))} \,d\sigma ds. 
\]
We now rewrite the following integral, using the invariance of $d\nu$ to the flow $\Psi_s$,
\begin{align*}
    \int_{T^1\Sigma}A(z)\,d\nu(z) &= \int_0^r\int_0^s\int_{T^1\Sigma}\frac{U(\Psi_s(z))}{U(\Psi_\sigma(z))}\,d\nu(z)d\sigma ds\\
    &= \int_0^r\int_0^s\int_{T^1\Sigma}\frac{U(\Psi_{s-\sigma}(z))}{U(z)}\,d\nu(z)d\sigma ds.
\end{align*}
Next, in the key step of this proof, we reverse the direction of the flow. For $z=(x,\theta)$ let $\bar z= (x,-\theta)$ and note that $\Psi_s(\bar z) = \overline{\Psi_{-s}(z)}$. Then, using invariance of $\nu$ under the reversal map we have
\begin{align*}
    \int_{T^1\Sigma}A(\bar z)\,d\nu(z)&=\int_0^r\int_0^s\int_{T^1\Sigma}\frac{U(\Psi_{-s}(z))}{U(\Psi_{-\sigma}(z))}\,d\nu(z)d\sigma ds\\
    &=\int_0^r\int_0^s\int_{T^1\Sigma}\frac{U(z)}{U(\Psi_{s-\sigma}(z))}\,d\nu(z)d\sigma ds.
\end{align*}
Taking advantage of the two reciprocal integrands, with the fact that $a+a^{-1}\geq 2$ for all $a>0$, we can estimate
\begin{align*}
    \int_\Sigma \area_{\tilde g}(\Omega(x))u(x)\,d\mu_0(x) &= \int_{T^1\Sigma}A(z)\,d\nu(z) \\&= \frac{1}{2}\int_{T^1\Sigma} (A(z)+A(\bar z))d\nu(z)\\
    &= \frac{1}{2}\int_0^r\int_0^s\int_{T^1\Sigma}\frac{U(z)}{U(\Psi_{s-\sigma}(z))}+\frac{U(\Psi_{s-\sigma}(z))}{U(z)}\,d\nu(z)d\sigma ds\\
    &\geq \int_0^r\int_0^s\int_{T^1\Sigma}d\nu(z)\,d\sigma ds = \nu(T^1\Sigma)\frac{r^2}{2}.
\end{align*}
This proves the volume bound (\ref{eq:main-thm-1}) in Theorem \ref{theo:main-torus} once we note that
\[\nu(T^1\Sigma) = 2\pi\int_\Sigma u(x)\,d\mu_0(x).\]
Finally, if the inequality (\ref{eq:main-thm-1}) is an equality, it means that $U$ is invariant with respect to the flow $\Psi_s$ for all $s \in (0,r)$. Differentiating this at $s=0$ we obtain $XU(z) = 0$. As such, $u$ is constant along every $g_0$-geodesic. This means that $u$ is constant, so $g$ has constant curvature 0.

\subsection{The proof of Theorem~\ref{theo:main-hyperbolic}}
As in the previous proof, we begin with a change of variables from $t$ to $s$ of (\ref{eq:A-integral}) where $t=T_s(z)$. In this case we obtain
\begin{align*}
A(z) &= \int_0^r U(\Psi_s(z))\sinh( T_s(z)) \,ds\\
&= \int_0^r\int_0^s
\frac{U(\Psi_s(z))}{U(\Psi_\sigma(z))}
\cosh\big(T_{s-\sigma}(\Psi_\sigma(z))\big)
\,d\sigma ds.
\end{align*}
In the second step above, we use the following identities
\[
\sinh(T_s(z)) = \int_0^s\frac{\cosh(T_{s}(z)-T_{\sigma}(z))}{U(\Psi_\sigma(z))}\,d\sigma, \qquad T_s(z)-T_\sigma(z) = T_{s-\sigma}(\Psi_\sigma(z)).
\]
We proceed by again using the invariance property of $d\nu$ together with a change in direction of the flow. In this case,
\begin{align*}
    \int_{T^1\Sigma}\frac{U(\Psi_s(z))}{U(\Psi_\sigma(z))}\cosh(T_{s-\sigma}(\Psi_\sigma(z)))\,d\nu(z) &= 
    \int_{T^1\Sigma}\frac{U(\Psi_{s-\sigma}(z))}{U(z)}\cosh(T_{s-\sigma}(z))\,d\nu(z)\\
    &= \int_{T^1\Sigma}\frac{U(y)}{U(\Psi_{s-\sigma}(y))}\cosh(T_{s-\sigma}(y))\,d\nu(y)
\end{align*}
where the second equality uses $\Psi_{s-\sigma}(z) = \bar y$ and the fact that 
\[
T_{s-\sigma}(\Psi_{\sigma-s}(\bar y)) = T_{s-\sigma}(\overline{\Psi_{s-\sigma}(y)}) = T_{s-\sigma}(y).
\]
Just as in the previous proof, we can use these reciprocal integrands to find an estimate
\begin{equation}\label{eq:hyperbolic-estimate}
    \int_{T^1\Sigma}A(z)\,d\nu(z) \geq\int_0^r\int_0^s\int_{T^1\Sigma}\cosh(T_{s-\sigma}(z))\,d\nu(z)d\sigma ds.
\end{equation}
Using Jensen's inequality, Cauchy-Schwarz and the area normalization of $g$ (see a more detailed explanation below), we get,
\begin{equation}\label{eq:cosh-estimate}
\int_{T^1\Sigma}\cosh(T_{s-\sigma}(z))\,d\nu(z) \geq \nu(T^1\Sigma)\cosh(s-\sigma).
\end{equation}
By combining (\ref{eq:cosh-estimate}) with (\ref{eq:hyperbolic-estimate}) we have
\[
\int_{\Sigma}\area_{\tilde g}(\Omega(x))u(x)\,d\mu_0(x)\geq \nu(T^1\Sigma)(\cosh r -1).
\]
This completes the proof of the volume bound (\ref{eq:main-thm-2}) just as in the case of the torus. The rigidity of the metric when the volume bound is an equality also follows for the same reason as in the torus case.

To see why (\ref{eq:cosh-estimate}) holds, begin with Jensen's inequality
\begin{align*}
\int_{T^1\Sigma}\cosh(T_{s-\sigma}(z))\,d\nu(z)&\geq \nu(T^1\Sigma)\cosh\left(\nu(T^1\Sigma)^{-1}\int_{T^1\Sigma}T_{s-\sigma}(z)\,d\nu(z)\right).
\end{align*}
Then note that, by the invariance of $d\nu$ to $\Psi_s$, and the area-normalization of $(\Sigma,g)$,
\begin{align*}
    \int_{T^1\Sigma}T_{s-\sigma}(z)\,d\nu(z) &= \int_{T^1\Sigma}\int_0^{s-\sigma}\frac{1}{U(\Psi_t(z))}\,dtd\nu(z)\\
    &=\int_0^{s-\sigma}\int_{T^1\Sigma}\frac{1}{U(z)}\,d\nu(z)dt\\
    &= 2\pi(s-\sigma)\area_{g_0}(\Sigma)\\
    &= 2\pi(s-\sigma)\area_g(\Sigma).
\end{align*}
Finally, by Cauchy-Schwarz inequality,
\[\nu(T^1\Sigma)=2\pi\int_{\Sigma}u\,d\mu_0\leq 2\pi(\area_g(\Sigma)^{1/2}\area_{g_0}(\Sigma)^{1/2}) = 2\pi\area_g(\Sigma).\]
Since $[0,\infty)\ni x \mapsto \cosh x$ is increasing, the inequality  (\ref{eq:cosh-estimate}) follows by combining all the above.

\end{document}